\begin{document}

\title*{Revisit on global existence of solutions for semilinear damped wave equations in
$\mathbb{R}^N$ with noncompactly supported initial data
}
\titlerunning{Semilinear damped wave equation for noncompactly data}
\author{Yuta Wakasugi}
\institute{Laboratory of Mathematics,
Graduate School of Advanced Science and Engineering,
Hiroshima University,
Higashi-Hiroshima, 739-8527, Japan.
\email{wakasugi@hiroshima-u.ac.jp}
}
%
%
\maketitle

\abstract*{
In this note, we study the Cauchy problem of the semilinear damped wave equation
and our aim is the small data global existence for noncompactly supported initial data.
For this problem, Ikehata and Tanizawa \cite{IkTa05} introduced
the energy method with the exponential-type weight function $e^{|x|^2/(1+t)}$,
which is the so-called Ikehata--Todorova--Yordanov type weight.
In this note, we suggest another weight function of the form $(1+|x|^2/(1+t))^{\lambda}$,
which allows us to treat polynomially decaying initial data
and give a simpler proof than the previous studies treating such initial data.
}

\abstract{In this note, we study the Cauchy problem of the semilinear damped wave equation
and our aim is the small data global existence for noncompactly supported initial data.
For this problem, Ikehata and Tanizawa \cite{IkTa05} introduced
the energy method with the exponential-type weight function $e^{|x|^2/(1+t)}$,
which is the so-called Ikehata--Todorova--Yordanov type weight.
In this paper, we suggest another weight function of the form $(1+|x|^2/(1+t))^{\lambda}$,
which allows us to treat polynomially decaying initial data
and give a simpler proof than the previous studies treating such initial data.
}

\section{Introduction}
\label{sec:1}
In this note, we study the Cauchy problem of the semilinear damped wave equation
\begin{align}\label{eq:ndw}
	\left\{ \begin{alignedat}{3}
	&u_{tt} - \Delta u + u_t = |u|^p,&\quad&(t,x) \in (0,\infty) \times \mathbb{R}^N,\\
	&u(0,x) = u_0(x), \ u_t(0,x) = u_1(x),&\quad& x \in \mathbb{R}^N,
\end{alignedat} \right.
\end{align}
where $u = u(t,x)$ is a real-valued unknown,
$x = (x_1,\ldots,x_N) \in \mathbb{R}^N$,
$N \ge 1$,
$u_t = \partial u/\partial t, u_{tt} = \partial^2 u/\partial t^2$,
$\Delta u = \partial^2 u/\partial x_1^2 + \cdots + \partial^2 u/ \partial x_N^2$,
$p>1$,
and
$u_0, u_1$ are given initial data.

The purpose is to show the small data global existence of the solution
with noncompactly supported initial data.
For this problem, Ikehata--Tanizawa \cite{IkTa05} introduced
the energy method with the exponential-type weight function $e^{|x|^2/(1+t)}$,
which is the so-called Ikehata--Todorova--Yordanov type weight.
They proved the small data global existence for
$p > p_F(N) := 1+2/N$ and exponentially decaying initial data.
This result removed the compactness assumption on the support of initial data
in the earlier result by Todorova--Yordanov \cite{ToYo01}.

In this note, we suggest another weight function of the form
$(1+ \frac{|x|^2}{1+t})^{\lambda}$ with sufficiently large $\lambda$.
This allows us to weaken the assumption of \cite{IkTa05} to polynomially decaying initial data.
Actually, this type of weight was already used in Ikehata--Nishihara--Zhao \cite{IkNiZh06} to study 
the case of absorbing nonlinearity.
However, it seems to have been overlooked that this technique is also effective in the case of sourcing nonlinearity.

To state our results, we first define the mild solution.
Let
$\mathcal{D}(t)$
be the fundamental solution of the linear damped wave equation, that is,
$\mathcal{D}(t)$ is defined by the Fourier multiplier
\[
	\mathcal{D}(t) = \mathcal{F}^{-1} e^{-\frac{t}{2}} \frac{\sinh (t \sqrt{1/4-|\xi|^2})}{\sqrt{1/4-|\xi|^2}} \mathcal{F},
\]
where
$\mathcal{F}, \mathcal{F}^{-1}$
denote the Fourier and inverse Fourier transforms, respectively.

\begin{definition}\label{def:sol}
Let $T \in (0, \infty]$ and
$I = [0,T]$ if $T < \infty$ and $I = [0,\infty)$ if $T = \infty$.
Let $(u_0, u_1) \in H^1(\mathbb{R}^N) \times L^2(\mathbb{R}^N)$.
We say that
$u \in C(I;H^1(\mathbb{R}^N)) \cap C^1(I;L^2(\mathbb{R}^N))$
is a mild solution of the Cauchy problem \eqref{eq:ndw} on $I$ if
$u$ satisfies
\[
	u(t) = \mathcal{D}(t) (u_0+u_1) + \frac{\partial}{\partial t} \mathcal{D}(t) u_0
	+ \int_0^t \mathcal{D}(t-s) |u(s)|^p \,ds
\]
in $C(I;H^1(\mathbb{R}^N)) \cap C^1(I;L^2(\mathbb{R}^N))$.
In particular, if $T < \infty$, then $u$ is called a local (in time) mild solution;
if $T = \infty$, then $u$ is called a global (in time) mild solution.
\end{definition}

The main result of this note is the following.

\begin{theorem}\label{thm:1}
Let $N \ge 1$ and
let $p > 1$ satisfy
\[
	p_F(N) < p \le \frac{N}{[N-2]_+},
\]
where $p_F(N) = 1+2/N$.
Then, there exist constants $\lambda=\lambda(N,p) > 0$ and $\varepsilon = \varepsilon (N, p, \lambda) > 0$
 such that
if the initial data
 $(u_0, u_1) \in H^1(\mathbb{R}^N) \times L^2(\mathbb{R}^N)$
 satisfy
 \begin{equation}\label{eq:ini}
	\| (1+|x|)^{\lambda} u_1 \|_{L^2}
	+ \| (1+|x|)^{\lambda} \nabla_x u_0 \|_{L^2}
	+ \| (1+|x|)^{\lambda} u_0 \|_{L^2} < \varepsilon,
 \end{equation}
 then the Cauchy problem \eqref{eq:ndw} admits the unique global mild solution.
\end{theorem}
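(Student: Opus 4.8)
The plan is to combine a standard contraction-mapping local existence with a global-in-time a priori weighted energy estimate, and to patch the two together by a continuation argument.

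First I would establish local well-posedness in the energy space. For data in $H^1\times L^2$ the Duhamel formulation of Definition~\ref{def:sol}, together with the $H^1$--$L^2$ boundedness of $\mathcal D(t)$ and $\partial_t\mathcal D(t)$ on bounded time intervals, makes the map $u\mapsto \mathcal D(t)(u_0+u_1)+\partial_t\mathcal D(t)u_0+\int_0^t\mathcal D(t-s)|u(s)|^p\,ds$ a contraction on a suitable ball of $C([0,T];H^1)\cap C^1([0,T];L^2)$ for $T$ small. The nonlinearity is handled here by the Sobolev embedding $H^1(\mathbb R^N)\hookrightarrow L^{2p}(\mathbb R^N)$, which is exactly the content of the upper restriction $p\le N/[N-2]_+$: it guarantees $|u|^p\in L^2$ with $\||u|^p\|_{L^2}=\|u\|_{L^{2p}}^p\lesssim\|u\|_{H^1}^p$, and a parallel difference estimate gives the Lipschitz bound. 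This yields a unique local solution together with the usual blow-up alternative: the solution extends as long as $\|u(t)\|_{H^1}$ stays bounded.

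Next, and this is the heart of the matter, I would derive a weighted energy estimate that is uniform on the whole existence interval. Set $\psi(t,x)=(1+\tfrac{|x|^2}{1+t})^{\lambda}$ and write $A=1+\tfrac{|x|^2}{1+t}$. Direct differentiation gives $\partial_t\psi=-\lambda A^{\lambda-1}\tfrac{|x|^2}{(1+t)^2}\le 0$ together with the structural identities
\[
\frac{|\nabla_x\psi|^2}{\psi}=\frac{4\lambda}{A}\,(-\partial_t\psi),\qquad
\frac{|\nabla_x\psi|^2}{-\partial_t\psi}=\frac{4\lambda}{A}\,\psi\le 4\lambda\,\psi,
\]
and, crucially, the logarithmic derivatives $\partial_t\psi/\psi$ and $|\nabla_x\psi|^2/\psi^2$ are of size $O\big((1+t)^{-1}\big)$ on the parabolic region $A\lesssim 1$. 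I would then test the equation against the multipliers $\psi u_t$ and $\psi u$, weighted in addition by a power $(1+t)^{\gamma}$, to produce a coupled system of differential inequalities for the quantities $\int\psi|u|^2\,dx$ and $\int\psi(|u_t|^2+|\nabla u|^2)\,dx$. The damping term $\int\psi|u_t|^2$ and the good term $-\tfrac12\int\partial_t\psi(|u_t|^2+|\nabla u|^2)$ furnish the dissipation, while the diffusion structure yields the expected heat-kernel decay rates. The nonlinear forcing is estimated by $\big|\int\psi|u|^p u_t\,dx\big|\le \varepsilon_0\int\psi|u_t|^2\,dx+C\int\psi|u|^{2p}\,dx$, and the weighted super-quadratic term is controlled by a weighted Gagliardo--Nirenberg interpolation between $\int\psi|u|^2$ and $\int\psi|\nabla u|^2$, producing a power of $(1+t)$ whose time integral converges precisely under the Fujita-supercritical condition $p>p_F(N)=1+2/N$. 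I would then run a bootstrap on
\[
M(t)=\sup_{0\le s\le t}\Big[(1+s)^{N/2}\!\!\int\psi|u|^2\,dx+(1+s)^{N/2+1}\!\!\int\psi(|u_t|^2+|\nabla u|^2)\,dx\Big],
\]
whose value at $t=0$ is controlled by the data norm in \eqref{eq:ini}; the estimates close as $M(t)\le C_0\varepsilon^2+C_1 M(t)^p$, so for $\varepsilon$ small a continuity argument forces $M(t)\le 2C_0\varepsilon^2$ for all $t$. Feeding this into the blow-up alternative gives global existence, and uniqueness is inherited from the contraction.

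The step I expect to be the main obstacle is the control of the weight-derivative cross term $\int u_t\,\nabla_x\psi\cdot\nabla u\,dx$ arising from the integration by parts in the $\psi u_t$ test. A naive Cauchy--Schwarz leaves a factor $4\lambda/A$ that is of size $O(\lambda)$ on the parabolic region, so it cannot simply be absorbed into the dissipation when $\lambda$ is large; the resolution is to exploit the $O((1+t)^{-1})$ time decay of the logarithmic derivatives recorded above, combined with the time-weight $(1+t)^{\gamma}$, so that the cross term contributes only an integrable-in-time perturbation. Choosing $\lambda=\lambda(N,p)$ large enough is dictated both by this balancing and by the requirement that the interpolation exponents in the nonlinear estimate be admissible; once $\lambda$ is fixed, $\varepsilon$ is chosen small depending on $N,p,\lambda$.
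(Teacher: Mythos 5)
You correctly identify the cross term $\int u_t\,\nabla_x\psi\cdot\nabla_x u\,dx$ as the main obstacle, but your proposed resolution does not work as stated. The best pointwise bound available is $|\nabla_x\psi|^2/\psi\le C\lambda^2(1+t)^{-1}\psi$, and a coefficient of size $(1+t)^{-1}$ is precisely \emph{not} integrable in time: treating the cross term as ``an integrable-in-time perturbation'' would cost a Gronwall factor $(1+t)^{C\lambda^2}$ that destroys every decay rate you are trying to propagate. At best one could absorb it into the dissipation for $t\gtrsim\lambda^2$, and even that requires a genuine $\int\psi|\nabla_x u|^2$ dissipation term produced by the second multiplier $\psi u$ with carefully tracked $\lambda$-dependent constants, none of which your sketch carries out. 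The paper disposes of the difficulty with one clean observation that you miss: the large constant is placed \emph{inside} the weight, $\psi=A+\frac{|x|^2}{1+t}$ with $A\ge\lambda/2$ (Definition~\ref{def:weight}), so that Lemma~\ref{lem:we} gives $|\nabla_x\Psi|^2/(-\Psi_t)=(\lambda/\psi)\Psi\le 2\Psi$, and the completed square in the proof of Lemma~\ref{lem:en} then shows the cross term is absorbed \emph{exactly} by the damping $-2\int u_t^2\Psi\,dx$ --- no time weight, no hierarchy, and the energy inequality closes with only the nonlinear terms remaining.

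The second, more structural gap is that you ask the weighted energy identities themselves to produce the heat-kernel rates $(1+s)^{-N/2}$ and $(1+s)^{-N/2-1}$ in your bootstrap quantity $M(t)$ (``the diffusion structure yields the expected heat-kernel decay rates''), but you supply no mechanism. Testing with $\psi u_t$ and $\psi u$ and multiplying by $(1+t)^{\gamma}$ leaves an unabsorbed lower-order term $\gamma(1+t)^{\gamma-1}\int\psi u^2\,dx$, and $L^2$ decay at the rate $(1+t)^{-N/2}$ encodes $L^1$-data information that cannot be extracted from these identities by Gronwall-type manipulations alone; one needs an additional device (Fourier splitting, or the bound $\|u\|_{L^1}\lesssim(1+t)^{N/4}(\int\Psi u^2\,dx)^{1/2}$ valid for $\lambda>N/2$ fed into a Gagliardo--Nirenberg estimate on the parabolic region, or the $\delta$-loss hierarchy of Todorova--Yordanov). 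The paper avoids this problem entirely: it never extracts decay from $E_\Psi$. It proves only that $E_\Psi$ stays \emph{bounded} (Lemmas~\ref{lem:en} and~\ref{lem:nl1}), and all decay rates in the norm $\|u\|_{X(T)}$ come from the Duhamel formula combined with Matsumura's $L^q$--$L^2$ linear estimates (Lemmas~\ref{lem:Ma}, \ref{lem:L2}, \ref{lem:dL2}); the Caffarelli--Kohn--Nirenberg inequality is the bridge letting the weighted energy control $\||u(s)|^p\|_{L^1}$ with integrable time decay. Two further points: your local theory and blow-up alternative live in unweighted $H^1\times L^2$ while your bootstrap quantity is weighted, so you need the weighted local well-posedness and continuation criterion of Lemma~\ref{lem:le} to even define $M(t)$ on the maximal interval; and your ``weighted Gagliardo--Nirenberg'' with the weight $\psi$ on every factor is not the standard CKN inequality --- the paper instead splits $\Psi\lesssim 1+(1+t)^{-\lambda}|x|^{2\lambda}$ and applies CKN with pure power weights, which is exactly where the explicit lower bounds on $\lambda$ in Remark~1 originate.
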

\begin{remark}
The notation
$p \le \frac{N}{[N-2]_+}$
means that
$p < \infty$ for $N \le 2$ and $p \le \frac{N}{N-2}$ for $N\ge 3$.
Also, we may take
$\lambda$ in the theorem so that
$\lambda > \max\{ 1, \frac{N}{2} \}$
and
\[
	\lambda > \max\left\{ \dfrac{2N-(N-2)p}{2p}, \dfrac{2N-8/N-(N-2)p}{4(p-p_F(N))}, \dfrac{q-1}{q}\dfrac{N+2-(N-2)p}{2(p-p_F(N))} \right\}
\]
with $q = \max\{ 2, \frac{N}{2}(p-1) \}$
hold.
Note that
$\lambda > \frac{N}{2}$ ensures
$u_0, \nabla_x u_0, u_1 \in L^1(\mathbb{R}^N)$.
\end{remark}

\begin{remark}
The conclusion of Theorem \ref{thm:1} itself is weaker than those of
Hayashi--Kaikina--Naumkin \cite{HaKaNa04DIE} and Sobajima \cite{So19DIE}
in the sense that our order of the weight $\lambda$ (see Remark 1 above) has to be larger than those of
\cite{HaKaNa04DIE, So19DIE}.
However, the method of the proof of Theorem \ref{thm:1}
is more elementary and it will also be applicable to
the case of time-dependent damping such as
\[
	u_{tt} - \Delta u + \frac{\mu}{1+t} u_t = |u|^p.
\]
It will slightly improve the assumption on the initial data in the result of D'Abbicco \cite[Theorem 3]{Da15}.
Moreover, combining with the linear estimates for damped wave equations in general measure spaces proved in
\cite{IkTaWa}, we will be able to obtain
global existence results for semilinear damped wave equations 
in more general domains or with some potentials, etc.
\end{remark}

Finally, we introduce some notations used in this note.
The set of nonnegative integers is expressed by $\mathbb{Z}_{\ge 0}$.
The symbol $f \lesssim g$ stands for $f \le C g$ with some constant $C > 0$. 
The gradient with respect to the spatial variables is written as $\nabla_x$,
that is,
$\nabla_x u = (u_{x_1}, \ldots, u_{x_N})$.
$L^q(\mathbb{R}^N)$ and $H^s(\mathbb{R}^N)$ denote the usual Lebesgue and Sobolev spaces, respectively.

\section{Energy estimate with a polynomial-type weight}
The main idea of this note is to use the following weighted energy functional.
\label{sec:2}
\begin{definition}\label{def:weight}
For $\lambda> 0$ and $A \ge \lambda/2$,
we define the weight functions by
\begin{equation}
	\psi(t,x) = A + \frac{|x|^2}{1+t},\quad \Psi(t,x) = \psi(t,x)^{\lambda}
\end{equation}
and the weighted energy of a function $u$ by
\[
	E_{\Psi}(t) = \int_{\mathbb{R}^N} ( |u_t(t,x)|^2 + |\nabla_x u(t,x)|^2 ) \Psi(t,x) \,dx.
\]
\end{definition}
\begin{lemma}\label{lem:we}
The weight function $\Psi$ satisfies
$\Psi_t \le 0$
and
\[
	\frac{|\nabla_x \Psi|^2}{- \Psi_t} \le 2 \Psi.
\]
\end{lemma}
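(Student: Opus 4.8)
The plan is to prove both assertions by direct differentiation, since $\Psi$ is an explicit function of the single scalar quantity $\psi = A + |x|^2/(1+t)$. First I would record the elementary derivatives of $\psi$: from $\psi_t = -|x|^2/(1+t)^2$ and $\nabla_x\psi = 2x/(1+t)$ one reads off $\psi_t \le 0$ and $|\nabla_x\psi|^2 = 4|x|^2/(1+t)^2$. Applying the chain rule to $\Psi = \psi^{\lambda}$ then gives $\Psi_t = \lambda\psi^{\lambda-1}\psi_t$ and $\nabla_x\Psi = \lambda\psi^{\lambda-1}\nabla_x\psi$. Because $\lambda > 0$ and $\psi \ge A > 0$, the scalar factor $\lambda\psi^{\lambda-1}$ is strictly positive, so the sign of $\Psi_t$ equals that of $\psi_t$; this immediately yields $\Psi_t \le 0$, the first claim.

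For the second claim I would avoid dividing by $-\Psi_t$ (which vanishes on the set $\{x=0\}$) by instead establishing the equivalent product form $|\nabla_x\Psi|^2 \le -2\Psi\,\Psi_t$; this encodes the stated quotient inequality wherever $\Psi_t < 0$ and holds trivially, with both sides equal to zero, when $x=0$. Substituting the expressions above, the left-hand side equals $4\lambda^2\psi^{2\lambda-2}|x|^2/(1+t)^2$ and the right-hand side equals $2\lambda\psi^{2\lambda-1}|x|^2/(1+t)^2$. After cancelling the common nonnegative factor $\lambda\psi^{2\lambda-2}|x|^2/(1+t)^2$, the inequality reduces to the purely pointwise comparison $4\lambda \le 2\psi$, that is, $2\lambda \le \psi$.

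It then remains only to verify this lower bound everywhere, and here the normalization of $A$ enters: since $|x|^2/(1+t) \ge 0$ one has $\psi \ge A$, so the constraint relating $A$ and $\lambda$ in Definition~\ref{def:weight} supplies the required bound uniformly in $(t,x)$. I do not anticipate a genuine obstacle, since the statement is an algebraic inequality between explicit functions once the derivatives are computed; the only mild care needed is in treating the degeneracy of $-\Psi_t$ along $\{x=0\}$, which the product formulation $|\nabla_x\Psi|^2 \le -2\Psi\,\Psi_t$ disposes of cleanly.
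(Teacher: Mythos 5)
Your differentiation is correct for the weight as \emph{literally} defined in Definition~\ref{def:weight}: with $\psi = A + |x|^2/(1+t)$ one indeed has $\psi_t = -|x|^2/(1+t)^2$ and $|\nabla_x \psi|^2 = 4|x|^2/(1+t)^2$, and your reduction of the product form $|\nabla_x \Psi|^2 \le -2\Psi\,\Psi_t$ to the pointwise condition $4\lambda \le 2\psi$, i.e.\ $\psi \ge 2\lambda$, is exactly right (the product formulation to sidestep the degeneracy of $-\Psi_t$ at $x=0$ is a nice touch, more careful than the paper's direct quotient). The gap is your final sentence: you assert that ``the constraint relating $A$ and $\lambda$ in Definition~\ref{def:weight} supplies the required bound,'' but that constraint is $A \ge \lambda/2$, which gives only $\psi \ge \lambda/2$ --- a factor of $4$ short of the needed $\psi \ge 2\lambda$. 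With your (correct) derivatives one gets $\frac{|\nabla_x \Psi|^2}{-\Psi_t} = \frac{4\lambda}{\psi}\Psi \le \frac{4\lambda}{A}\Psi$, which under $A = \lambda/2$ only yields the bound $8\Psi$, and this is sharp as $|x|^2/(1+t) \to 0$; so the stated inequality does not follow from your argument as written.

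What you have run into is an inconsistency in the paper itself: its proof computes $\Psi_t = -\lambda\frac{|x|^2}{4(1+t)^2}\psi^{\lambda-1}$ and $|\nabla_x \Psi|^2 = \lambda^2\frac{|x|^2}{4(1+t)^2}\psi^{2(\lambda-1)}$, i.e.\ it works with the normalization $\psi = A + \frac{|x|^2}{4(1+t)}$ (the usual Ikehata--Todorova--Yordanov scaling), for which the ratio is $\lambda\psi^{\lambda-1} = \frac{\lambda}{\psi}\Psi \le \frac{\lambda}{A}\Psi$ and $A \ge \lambda/2$ does suffice. So either Definition~\ref{def:weight} is missing the $4$ in the denominator, or the hypothesis should read $A \ge 2\lambda$; your proof goes through verbatim under either repair, but you must invoke one of them explicitly rather than appeal vaguely to ``the constraint.'' Note finally that the constant $2$ is not cosmetic: in the proof of Lemma~\ref{lem:en} the bound $\frac{|\nabla_x \Psi|^2}{-\Psi_t}u_t^2 \le 2\Psi u_t^2$ is absorbed exactly by the damping term $-2\int_{\mathbb{R}^N} u_t^2 \Psi\,dx$, so settling for $8\Psi$ would break the energy estimate downstream.
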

\begin{proof}
We calculate
\[
	\Psi_t = \lambda \psi^{\lambda-1} \psi_t = - \lambda \frac{|x|^2}{4(1+t)^2} \psi^{\lambda-1} < 0.
\]
Moreover, since $A \ge \lambda/2$, then we have
\[
	\frac{|\nabla_x \Psi|^2}{-\Psi_t}
	= \frac{ \lambda^2 \frac{|x|^2}{4(1+t)^2} \psi^{2(\lambda-1)}}{\lambda \frac{|x|^2}{4(1+t)^2} \psi^{\lambda-1}}
	= \lambda \psi^{\lambda-1} 
	= \frac{\lambda}{\psi} \Psi \le \frac{\lambda}{A} \Psi \le 2 \Psi.
\]
This completes the proof.
\end{proof}

\begin{lemma}\label{lem:en}
Let $u$ be a mild solution to \eqref{eq:ndw} on $[0,T]$.
 Then, we have
 \begin{align*}
 	E_{\Psi}(t) &\le E_{\Psi}(0) - \frac{2}{p+1} \int_{\mathbb{R}^N} |u_0(x)|^p u_0(x) \Psi(0,x) \,dx \\
	&\quad
	+ \frac{2}{p+1} \int_{\mathbb{R}^N} |u(t,x)|^p u(t,x) \Psi(t,x) \,dx \\
	&\quad
	- \frac{2}{p+1} \int_0^t \int_{\mathbb{R}^N} |u(s,x)|^p u(s,x) \Psi_t(s,x) \,dxds,
 \end{align*}
 provided that the right-hand side is finite.
\end{lemma}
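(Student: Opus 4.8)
The plan is to run the standard weighted energy computation with the multiplier $2u_t\Psi$. Formally, I would pair the equation $u_{tt} - \Delta u + u_t = |u|^p$ with $2u_t\Psi$ and integrate over $\mathbb{R}^N$: the first term produces $\int \partial_t(|u_t|^2)\Psi\,dx$, and integrating the Laplacian term by parts in $x$ yields $\int \partial_t(|\nabla_x u|^2)\Psi\,dx + 2\int \nabla_x u\cdot u_t\,\nabla_x\Psi\,dx$. Collecting the two perfect $t$-derivatives through $\frac{d}{dt}E_{\Psi} = \int \partial_t(|u_t|^2+|\nabla_x u|^2)\Psi\,dx + \int(|u_t|^2+|\nabla_x u|^2)\Psi_t\,dx$, and rewriting the source term via the identity $2|u|^p u_t = \frac{2}{p+1}\partial_t(|u|^p u)$, I expect to arrive at the exact identity
\[
\frac{d}{dt}\left[ E_{\Psi}(t) - \frac{2}{p+1}\int_{\mathbb{R}^N}|u|^p u\, \Psi\,dx\right]
= \int_{\mathbb{R}^N}\Big[(|u_t|^2+|\nabla_x u|^2)\Psi_t - 2\nabla_x u\cdot u_t\,\nabla_x\Psi - 2|u_t|^2\Psi\Big]\,dx
- \frac{2}{p+1}\int_{\mathbb{R}^N}|u|^p u\,\Psi_t\,dx .
\]

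The key step is to show that the first bracketed integrand is nonpositive, so that only the last term survives. Since $\Psi_t\le 0$ by Lemma \ref{lem:we}, Young's inequality gives $-2\nabla_x u\cdot u_t\,\nabla_x\Psi \le |\nabla_x u|^2(-\Psi_t) + |u_t|^2\,\frac{|\nabla_x\Psi|^2}{-\Psi_t}$, and the bound $\frac{|\nabla_x\Psi|^2}{-\Psi_t}\le 2\Psi$ from the same lemma upgrades this to $-2\nabla_x u\cdot u_t\,\nabla_x\Psi \le |\nabla_x u|^2(-\Psi_t) + 2|u_t|^2\Psi$. Substituting into the bracket, the $|\nabla_x u|^2$ and $|u_t|^2\Psi$ contributions cancel and it collapses to $|u_t|^2\Psi_t\le 0$ pointwise. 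Hence the time derivative above is bounded by $-\frac{2}{p+1}\int_{\mathbb{R}^N}|u|^p u\,\Psi_t\,dx$, and integrating from $0$ to $t$ produces exactly the claimed inequality.

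The main obstacle is that a mild solution only lies in $C(I;H^1)\cap C^1(I;L^2)$, so the pairing, the spatial integration by parts, and the differentiation under the integral sign are not a priori justified; moreover $\Psi$ grows polynomially in $|x|$, so quantities such as $\int|\nabla_x u|^2\Psi\,dx$ need not be finite without extra information. To make this rigorous I would approximate the data by smooth, compactly supported $(u_0^{(n)},u_1^{(n)})$ converging in the weighted norms of \eqref{eq:ini}. Using finite propagation speed for the damped wave equation, the corresponding classical solutions stay compactly supported for each fixed $t$, so every integral is finite, no boundary term arises at spatial infinity, and the identity of the first paragraph holds exactly. The inequality then passes to the limit via continuous dependence of the mild solution on the data, the finiteness hypothesis on the right-hand side guaranteeing that the limiting quantities are well defined. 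As an alternative that avoids invoking propagation speed, one can replace $\Psi$ by a bounded truncation and let the truncation parameter tend to infinity through monotone and dominated convergence, checking that the cutoff contributes only error terms of favorable sign.
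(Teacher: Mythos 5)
Your proposal is correct and follows essentially the same route as the paper: multiply the equation by $2u_t\Psi$, integrate by parts, absorb the cross term $-2u_t\,\nabla_x u\cdot\nabla_x\Psi$ via Lemma~\ref{lem:we} (your weighted Young's inequality is exactly the paper's completing-the-square step), rewrite $2|u|^p u_t\Psi$ as $\tfrac{2}{p+1}\partial_t(|u|^p u)\Psi$ minus the $\Psi_t$ correction, and integrate in time. The paper likewise handles rigor only by deferring to an approximation argument (a footnote citing \cite{Wa23}), so your more detailed sketch of the approximation/truncation step is consistent with, indeed slightly more explicit than, the paper's treatment.
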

\begin{proof}
Differentiating $E_{\Psi}(t)$, applying integration by parts and using the equation \eqref{eq:ndw},
we have
\footnote{By an approximation argument, we can justify the fornal calclulation here, see e.g., \cite{Wa23}}
\begin{align*}
	\frac{d}{dt} E_{\Psi}(t)
	&=
	2 \int_{\mathbb{R}^N} \left( u_t u_{tt} + \nabla_x u \cdot \nabla_x u_t \right) \Psi \,dx
	+ \int_{\mathbb{R}^N} \left( |u_t|^2 + |\nabla_x u|^2 \right) \Psi_t \,dx \\
	&=
	-2 \int_{\mathbb{R}^N} u_t^2 \Psi \,dx + 2 \int_{\mathbb{R}^N} |u|^p u_t \Psi \,dx \\
	&\quad
	- 2 \int_{\mathbb{R}^N} u_t (\nabla_x u \cdot \nabla_x \Psi) \,dx
	+ \int_{\mathbb{R}^N} \left( |u_t|^2 + |\nabla_x u|^2 \right) \Psi_t \,dx.
\end{align*}
Here, by Lemma \ref{lem:we}, we note that $\Psi_t \le 0$ and
\begin{align*}
	-2 u_t (\nabla_x u \cdot \nabla_x \Psi) + |\nabla_x u|^2 \Psi_t
	=
	\frac{1}{\Psi_t} \left| \Psi_t \nabla_x u - u_t \nabla_x \Psi \right|^2 - \frac{|\nabla_x \Psi|^2}{\Psi_t} u_t^2
	\le 2 \Psi u_t^2
\end{align*}
hold.
Using them to the previous identity, we deduce
\begin{align*}
	\frac{d}{dt} E_{\Psi}(t)
	&\le
	2 \int_{\mathbb{R}^N} |u|^p u_t \Psi \,dx \\
	&=
	\frac{2}{p+1} \frac{d}{dt} \int_{\mathbb{R}^N} |u|^p u \Psi \,dx
	- \frac{2}{p+1} \int_{\mathbb{R}^N} |u|^p u \Psi_t \,dx.
\end{align*}
Integrating it over $[0,t]$ gives the conclusion.
\end{proof}

\section{Proof of Theorem \ref{thm:1}}
\subsection{Preliminaries}
The following local existence result can be proved in completely the same way as
Ikehata--Tanizawa \cite[Proposition 2.1]{IkTa05}, and we omit the proof.
\begin{lemma}\label{lem:le}
Let $N \ge 1$, $1<p \le \frac{N}{[N-2]_+}$.
Then for each $(u_0, u_1) \in H^1(\mathbb{R}^N) \cap L^2(\mathbb{R}^N)$ satisfying
\[
	\| (1+|x|)^{\lambda} u_1 \|_{L^2}
	+ \| (1+|x|)^{\lambda} \nabla_x u_0 \|_{L^2}
	+ \| (1+|x|)^{\lambda} u_0 \|_{L^2} < \infty,
\]
there exists $T > 0$ such that the Cauchy problem \eqref{eq:ndw} admits
a unique mild solution on $[0,T]$ satisfying
\[
	\| \Psi^{1/2} u_t(t) \|_{L^2}
	+ \| \Psi^{1/2} \nabla_x u(t) \|_{L^2}
	+ \| \Psi^{1/2} u(t) \|_{L^2} < \infty
\]
for $t \in [0,T]$.
Moreover, if the maximal existence time $T_m$, the supremum of $T$ such that
there is a unique mild solution on $[0,T]$, is finite, then the solution must satisfy
\[
	\limsup_{t \uparrow T_m} \left( \| \Psi^{1/2} u_t(t) \|_{L^2}
	+ \| \Psi^{1/2} \nabla_x u(t) \|_{L^2}
	+ \| \Psi^{1/2} u(t) \|_{L^2} \right)
	= + \infty.
\]
\end{lemma}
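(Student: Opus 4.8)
The plan is to construct the solution by a standard contraction-mapping argument applied to the Duhamel map
\[
	\Phi(u)(t) = \mathcal{D}(t)(u_0+u_1) + \frac{\partial}{\partial t}\mathcal{D}(t)u_0 + \int_0^t \mathcal{D}(t-s)|u(s)|^p\,ds,
\]
but carried out in a function space that already encodes the weighted bounds. Introduce the weighted norm
\[
	\|u\|_{X_T} := \sup_{t\in[0,T]}\big(\|\Psi^{1/2}u_t(t)\|_{L^2}+\|\Psi^{1/2}\nabla_x u(t)\|_{L^2}+\|\Psi^{1/2}u(t)\|_{L^2}\big),
\]
and for $M>0$ consider the ball $X_{T,M}=\{u\in C([0,T];H^1)\cap C^1([0,T];L^2): \|u\|_{X_T}\le M\}$, which I make into a complete metric space using the weaker metric $d(u,v)=\sup_{[0,T]}(\|u_t-v_t\|_{L^2}+\|\nabla_x(u-v)\|_{L^2}+\|u-v\|_{L^2})$. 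Using the strong (weighted) norm to define the ball while measuring the contraction in the weak (unweighted energy) metric is the usual device of Kato that avoids differentiating the nonlinearity in the weighted norm; the ball is $d$-closed by lower semicontinuity of the weighted norm.

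First I would establish the weighted linear estimate: for a solution $v$ of the linear damped wave equation with forcing $F$, the quantity $\|\Psi^{1/2}v_t\|_{L^2}^2+\|\Psi^{1/2}\nabla_x v\|_{L^2}^2$ is controlled on $[0,T]$ by the weighted data together with $\int_0^t\|\Psi^{1/2}F(s)\|_{L^2}\,ds$. This is exactly the computation already performed in the proof of Lemma \ref{lem:en}: multiply the equation by $v_t\Psi$, integrate by parts, and absorb the cross term $-2v_t(\nabla_x v\cdot\nabla_x\Psi)+|\nabla_x v|^2\Psi_t$ by completing the square and invoking $|\nabla_x\Psi|^2/(-\Psi_t)\le 2\Psi$ from Lemma \ref{lem:we}. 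A Grönwall argument, together with controlling $\|\Psi^{1/2}v\|_{L^2}$ by integrating $\partial_t(\Psi^{1/2}v)$ in time, then yields boundedness of the propagators $\mathcal{D}(t),\frac{\partial}{\partial t}\mathcal{D}(t)$ and of the Duhamel integral on the weighted space, with a constant that stays finite on $[0,T]$.

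The second ingredient is the weighted nonlinear estimate. Writing $\Psi^{1/2}|u|^p=(\Psi^{1/(2p)}|u|)^p$ and setting $w=\Psi^{1/(2p)}u$, I would estimate $\|\Psi^{1/2}|u|^p\|_{L^2}=\|w\|_{L^{2p}}^p\lesssim \|w\|_{H^1}^p$, where the Sobolev embedding $H^1\hookrightarrow L^{2p}$ is available precisely under the subcriticality assumption $p\le N/[N-2]_+$. The factor $w$ is controlled by $\|u\|_{X_T}$, because $\Psi^{1/(2p)}\lesssim\Psi^{1/2}$ (as $p\ge1$ and $\Psi$ is bounded below) and because $|\nabla_x\Psi^{1/(2p)}|\lesssim\Psi^{1/(2p)}$, the latter following from the elementary bound $|\nabla_x\Psi|/\Psi=\lambda|\nabla_x\psi|/\psi\lesssim 1$. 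Hence $\|\Psi^{1/2}|u|^p\|_{L^2}\lesssim\|u\|_{X_T}^p$, while the pointwise inequality $\big||u|^p-|v|^p\big|\lesssim(|u|^{p-1}+|v|^{p-1})|u-v|$ combined with Hölder and $H^1\hookrightarrow L^{N(p-1)}\cap L^{2N/(N-2)}$ gives the Lipschitz bound $\||u|^p-|v|^p\|_{L^2}\lesssim(\|u\|_{X_T}^{p-1}+\|v\|_{X_T}^{p-1})\,d(u,v)$. Feeding these into the weighted and unweighted linear estimates shows that $\Phi$ maps $X_{T,M}$ into itself once $M$ is comparable to the weighted norm of the data, and that $\Phi$ is a contraction in $d$ once $T$ is small; crucially, the existence time obtained this way depends only on that weighted norm. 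The blow-up alternative then follows in the standard way: as long as the weighted norm stays bounded the solution can be restarted over time steps of a fixed length, so finiteness of $T_m$ forces the weighted norm to be unbounded as $t\uparrow T_m$.

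I expect the main obstacle to be the weighted estimates rather than the fixed-point mechanism: one must simultaneously propagate the space-time-dependent weight $\Psi$ through the linear propagator and distribute it across the nonlinearity, and it is the structural inequality of Lemma \ref{lem:we} (the polynomial analogue of the Ikehata--Todorova--Yordanov identity) that makes both steps close with constants that remain finite on $[0,T]$.
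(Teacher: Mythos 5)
Your contraction-mapping construction --- ball defined by the weighted norm $\|\Psi^{1/2}u_t\|_{L^2}+\|\Psi^{1/2}\nabla_x u\|_{L^2}+\|\Psi^{1/2}u\|_{L^2}$, contraction measured in the unweighted energy metric, weighted linear estimate via the multiplier $v_t\Psi$ and Lemma \ref{lem:we}, nonlinearity closed by Sobolev embedding under $p\le N/[N-2]_+$ --- is essentially the paper's own route: the paper omits the proof precisely because it is ``completely the same'' as Ikehata--Tanizawa's Proposition 2.1, which is this fixed-point argument with the polynomial weight $\Psi$ standing in for the exponential weight. One minor repair: for $1<p<1+2/N$ the embedding $H^1\hookrightarrow L^{N(p-1)}$ fails (since $N(p-1)<2$), so in the Lipschitz step pair the H\"older exponents as $\| |u|^{p-1}|u-v| \|_{L^2}\le \|u\|_{L^{2p}}^{p-1}\|u-v\|_{L^{2p}}$, which is covered by $H^1\hookrightarrow L^{2p}$ for all admissible $p$.
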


We define the following norm to construct the global solution.
\begin{definition}\label{def:X}
We define the norm $\| \cdot \|_{X(T)}$ by
\begin{align*}
	\| u \|_{X(T)}
	&:= \sup_{0<t<T} \left[
	E_{\Psi}(t)^{\frac{1}{2}}
	+ (1+t)^{\frac{N}{4}+1} \| u_t (t) \|_{L^2} \right. \\
	&\qquad \qquad \left.
	+ (1+t)^{\frac{N}{4}+\frac{1}{2}} \| \nabla_x u(t) \|_{L^2} 
	+ (1+t)^{\frac{N}{4}} \| u(t) \|_{L^2}
	\right].
\end{align*}
\end{definition}
By the local existence result in Lemma \ref{lem:le} and noting
\[
	\| \Psi^{1/2} u(t) \|_{L^2} = \left\| \Psi^{1/2} \left( u_0 + \int_0^t u_t (s) \,ds \right) \right\|_{L^2}
	\lesssim \| (1+|x|)^{\lambda} u_0 \|_{L^2} + T \|u \|_{X(T)},
\]
it suffices to show the a priori estimate
$\| u \|_{X(T)} \le C$ with some constant $C$ independent of $T$
under the smallness assumption \eqref{eq:ini}.

In order to estimate the $L^2$-norms in $\| u \|_{X(T)}$, we apply the following
Matsumura estimate.
\begin{lemma}[{Matsumura \cite{Ma76}}]\label{lem:Ma}
For $k \in \mathbb{Z}_{\ge 0}, \alpha \in \mathbb{Z}_{\ge 0}^N$, $q \in [1,2]$
and $g \in L^q(\mathbb{R}^N) \cap H^{k+|\alpha|-1}(\mathbb{R}^N)$, we have
\[
	\| \partial_t^k \partial_x^{\alpha} \mathcal{D}(t) g \|_{L^2}
	\lesssim (1+t)^{-\frac{N}{2}\left(\frac{1}{q}-\frac{1}{2}\right)-k-\frac{|\alpha|}{2}}
	\left( \| g \|_{L^q} + e^{-\frac{t}{4}} \| g \|_{H^{k+|\alpha|-1}} \right)
\]
\end{lemma}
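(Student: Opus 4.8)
The plan is to prove the bound by direct Fourier analysis, separating low and high frequencies. By Plancherel's theorem, $\|\partial_t^k\partial_x^\alpha\mathcal D(t)g\|_{L^2}=\|m_{k,\alpha}(t,\xi)\hat g(\xi)\|_{L^2_\xi}$, where, writing $\lambda_\pm(\xi)=-\tfrac12\pm\sqrt{\tfrac14-|\xi|^2}$ for the roots of the characteristic equation $\lambda^2+\lambda+|\xi|^2=0$, the symbol is
\[
	m_{k,\alpha}(t,\xi)=(i\xi)^\alpha\,\frac{\lambda_+^k e^{\lambda_+ t}-\lambda_-^k e^{\lambda_- t}}{\lambda_+-\lambda_-}.
\]
Note that $\frac{e^{\lambda_+t}-e^{\lambda_-t}}{\lambda_+-\lambda_-}=e^{-t/2}\frac{\sinh(t\sqrt{1/4-|\xi|^2})}{\sqrt{1/4-|\xi|^2}}$ is exactly the multiplier defining $\mathcal D(t)$. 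I would fix the threshold $|\xi|=1/4$, estimate the $L^2_\xi$ mass of $m_{k,\alpha}\hat g$ on the low region $\{|\xi|\le1/4\}$ and the high region $\{|\xi|\ge1/4\}$ separately, and add the two contributions by the triangle inequality.

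On the low region $\sqrt{1/4-|\xi|^2}$ is real and bounded away from $0$, so $\lambda_+-\lambda_-=\sqrt{1-4|\xi|^2}$ is bounded below; moreover $\lambda_+\le-|\xi|^2$ with $|\lambda_+|\sim|\xi|^2$, and $\lambda_-\le-1/2$. This yields the pointwise bound
\[
	|m_{k,\alpha}(t,\xi)|\lesssim|\xi|^{|\alpha|+2k}e^{-c|\xi|^2t}+|\xi|^{|\alpha|}e^{-t/2}
\]
for some $c>0$; the second summand is harmless. For the first I would apply H\"older's inequality in $\xi$ with exponents $\frac12=\frac1r+\frac1{q'}$, i.e.\ $\frac1r=\frac1q-\frac12$, peeling off $\|\hat g\|_{L^{q'}}\lesssim\|g\|_{L^q}$ by the Hausdorff--Young inequality (valid since $q\in[1,2]$). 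The remaining factor $\big\||\xi|^{|\alpha|+2k}e^{-c|\xi|^2t}\big\|_{L^r_\xi}$ is evaluated by the rescaling $\eta=\sqrt t\,\xi$, which produces precisely $t^{-\frac{N}{2r}-\frac{|\alpha|}2-k}=t^{-\frac N2(\frac1q-\frac12)-\frac{|\alpha|}2-k}$; for $t\le1$ the quantity is bounded, so the factor $(1+t)^{-\cdots}$ covers both ranges.

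On the high region I would instead use Plancherel directly against the Sobolev norm. For $|\xi|\ge1/2$ the roots are complex with $\mathrm{Re}\,\lambda_\pm=-1/2$ and $|\lambda_\pm|=|\xi|$, while $|\lambda_+-\lambda_-|=\sqrt{4|\xi|^2-1}\sim|\xi|$; over the compact transition zone $1/4\le|\xi|\le1$ the apparent singularity at $|\xi|=1/2$ is removable and the symbol, together with its $t$-derivatives, is uniformly bounded by a multiple of $e^{-t/2}$ times a polynomial in $t$. Consequently $|m_{k,\alpha}(t,\xi)|\lesssim e^{-t/2}\langle\xi\rangle^{k+|\alpha|-1}$ on $\{|\xi|\ge1/4\}$, whence
\[
	\big\|\mathbf{1}_{\{|\xi|\ge1/4\}}\,m_{k,\alpha}(t,\cdot)\hat g\big\|_{L^2_\xi}\lesssim e^{-t/2}\|g\|_{H^{k+|\alpha|-1}},
\]
and since $e^{-t/2}\le C(1+t)^{-M}e^{-t/4}$ for every $M$, this is absorbed into the second term on the right-hand side of the claim.

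The main obstacle I anticipate is the bookkeeping near the degenerate frequency $|\xi|=1/2$: one must verify that $\frac{\sinh(t\sqrt{1/4-|\xi|^2})}{\sqrt{1/4-|\xi|^2}}$, analytic in $|\xi|^2$, together with the symbols obtained after applying $\partial_t^k$, stays bounded there uniformly in $t$ with the correct exponential factor, so that the compact transition zone does not spoil either regime. Once these pointwise symbol estimates are established, the sharp power of $t$ comes for free from the scaling computation in the low-frequency H\"older step, and summing the two frequency pieces gives the stated inequality.
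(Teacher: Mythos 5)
The paper does not prove this lemma at all --- it is imported as a known result from Matsumura \cite{Ma76} --- so there is no internal proof to compare against; your Plancherel-plus-frequency-splitting strategy (Hausdorff--Young and H\"older at low frequencies with a Gaussian symbol bound, Sobolev norm at high frequencies) is indeed the standard route to this estimate. The architecture is sound, but there is one genuine error in your pointwise symbol bounds: the cut at $|\xi|=1/4$ is incompatible with the exponential rates you claim. On the annulus $1/4\le|\xi|<1/2$ both roots are still real, and the slow root is
\[
	\lambda_+=-\frac{|\xi|^2}{\tfrac12+\sqrt{\tfrac14-|\xi|^2}},
\]
which at $|\xi|=1/4$ equals $-(2-\sqrt{3})/4\approx-0.067$. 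Thus at $|\xi|=1/4$ the multiplier of $\mathcal D(t)$ itself is $e^{-t/2}\sinh(\sqrt3\,t/4)/(\sqrt3/4)\sim c\,e^{-(2-\sqrt3)t/4}$, which is \emph{not} bounded by $e^{-t/2}$ times any polynomial in $t$: your assertion that the symbol is uniformly $O(P(t)e^{-t/2})$ on the ``compact transition zone $1/4\le|\xi|\le1$'' fails on $1/4\le|\xi|<1/2$. Worse, since $(2-\sqrt3)/4<1/4$, this contribution cannot be absorbed into the $e^{-t/4}\|g\|_{H^{k+|\alpha|-1}}$ term either, so the high-frequency half of the argument, as written, breaks on that annulus.

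The repair is short and you should record it. Keep your low region $|\xi|\le1/4$ and high region $|\xi|\ge1/2$ exactly as you argued (for $|\xi|\ge1/2$ everything you say is correct: $\mathrm{Re}\,\lambda_\pm=-1/2$, the division by $|\lambda_+-\lambda_-|\sim|\xi|$ for $|\xi|\ge1$ is what saves one derivative and produces the $H^{k+|\alpha|-1}$ norm, and the linear-in-$t$ growth at the double root $|\xi|=1/2$ is absorbed via $te^{-t/2}\lesssim e^{-t/4}$). On the middle annulus $1/4\le|\xi|\le1/2$, where $\lambda_+-\lambda_-=\sqrt{1-4|\xi|^2}$ degenerates, use the divided-difference representation with $f(\lambda)=\lambda^k e^{\lambda t}$,
\[
	\frac{\lambda_+^k e^{\lambda_+t}-\lambda_-^k e^{\lambda_-t}}{\lambda_+-\lambda_-}
	=\int_0^1 f'\bigl(\lambda_-+\theta(\lambda_+-\lambda_-)\bigr)\,d\theta,
\]
whose exponential factor is maximized at $\lambda_+\le-|\xi|^2\le-1/16$; this yields $|m_{k,\alpha}(t,\xi)|\lesssim(1+t)\,e^{-t/16}$ there. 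Routing this through your H\"older/Hausdorff--Young step on the bounded annulus gives a contribution $\lesssim(1+t)e^{-t/16}\|g\|_{L^q}$, and any fixed exponential rate beats the polynomial factor $(1+t)^{-\frac N2(\frac1q-\frac12)-k-\frac{|\alpha|}2}$, so it is absorbed into the \emph{first} term of the claimed estimate (it cannot go into the $e^{-t/4}$ term, which is precisely where your version went wrong). With this modification the proof is complete; the low-frequency scaling computation, which produces the sharp power of $t$, is untouched.
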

Finally, to estimate the nonlinear term with polynomial weights,
we employ the Caffarelli--Kohn--Nirenberg inequality.
\begin{lemma}[{Caffarelli--Kohn--Nirenberg \cite{CaKoNi84}}] \label{lem:CaKoNi}
Let
$p, q, r, \alpha, \beta, \sigma, a \in \mathbb{R}$ satisfy
$p,q \ge 1$, $r > 0$, $0 \le a \le 1$,
\[
	\frac{1}{p} + \frac{\alpha}{N}, \frac{1}{q}+\frac{\beta}{N}, \frac{1}{r}+\frac{\gamma}{N} > 0,
\]
where
$\gamma = a \sigma + (1-a) \beta$.
Then, there exists a positive constant $C$ such that the following inequality holds for all
$u \in C_0^{\infty}(\mathbb{R}^N)$
\[
	\| |x|^{\gamma} u \|_{L^r} \le C \| |x|^{\alpha} \nabla_x u \|_{L^p}^a \| |x|^{\beta} u \|_{L^q}^{1-a}
\]
if and only if the following relations hold:
\[
	\frac{1}{r} + \frac{\gamma}{N}
	= a \left( \frac{1}{p} + \frac{\alpha-1}{N} \right) + (1-a) \left( \frac{1}{q} + \frac{\beta}{N} \right)
\]
(this is the dimensional balance),
\[
	0 \le \alpha -\sigma \quad \text{if} \quad a > 0,
\]
and
\[
	\alpha - \sigma \le 1 \quad \text{if} \quad a > 0 \quad
	\text{and} \quad \frac{1}{p} + \frac{\alpha-1}{N} = \frac{1}{r} + \frac{\gamma}{N}.
\]
\end{lemma}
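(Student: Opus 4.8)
The plan is to prove the two directions separately: the \emph{necessity} of the three listed relations, and their \emph{sufficiency} for the inequality. No density reduction is needed since the statement already concerns $u \in C_0^\infty(\mathbb{R}^N)$. For sufficiency my strategy is to isolate the pure-gradient endpoint $a=1$ (a weighted Hardy--Sobolev inequality) and to recover general $a \in [0,1]$ from it by a pointwise Hölder factorization, while treating by a separate, more robust argument the regime this reduction cannot reach.

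For necessity, the dimensional balance is forced by scaling. Replacing $u(x)$ by $u_\mu(x) := u(\mu x)$ and computing the homogeneity in $\mu$ of each of the three weighted norms, the proposed inequality becomes
\[
	\mu^{-\gamma - N/r} \| |x|^{\gamma} u \|_{L^r}
	\le C\, \mu^{-a(\alpha - 1 + N/p)}\, \mu^{-(1-a)(\beta + N/q)}
	\| |x|^{\alpha} \nabla_x u \|_{L^p}^a \| |x|^{\beta} u \|_{L^q}^{1-a},
\]
so that letting $\mu \to 0$ and $\mu \to \infty$ forces the two exponents of $\mu$ to coincide, which is exactly the stated dimensional balance. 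The sign condition $0 \le \alpha - \sigma$ (for $a>0$) and, in the borderline case, $\alpha - \sigma \le 1$, are then obtained by testing against truncated and dilated profiles modeled on $|x|^{-\delta}$ that concentrate mass at the origin (respectively spread it to infinity); the ratio of the two sides of the inequality blows up unless these relations hold.

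For sufficiency I would first establish the endpoint inequality $\| |x|^{\sigma} u \|_{L^{r_1}} \lesssim \| |x|^{\alpha} \nabla_x u \|_{L^p}$, where $r_1$ is fixed by the Sobolev balance $\frac{1}{r_1} + \frac{\sigma}{N} = \frac{1}{p} + \frac{\alpha-1}{N}$ and $0 \le \alpha - \sigma \le 1$. Its proof rests on the divergence identity $\mathrm{div}(|x|^{\sigma r_1} x) = (\sigma r_1 + N)|x|^{\sigma r_1}$, legitimate because the positivity hypothesis $\frac{1}{r_1} + \frac{\sigma}{N} > 0$ gives $\sigma r_1 + N > 0$; integrating by parts (no boundary terms, $u$ having compact support) rewrites $\int |x|^{\sigma r_1} |u|^{r_1}\,dx$ as $-\frac{r_1}{\sigma r_1 + N} \int |x|^{\sigma r_1} (x \cdot \nabla_x u) |u|^{r_1 - 2} u \, dx$, after which $|x \cdot \nabla_x u| \le |x|\,|\nabla_x u|$, Hölder's inequality (and, for $\alpha - \sigma < 1$, a further interpolation) close the estimate. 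General $a \in (0,1)$ with $\alpha - \sigma \le 1$ then follows from the pointwise identity $|x|^{\gamma} |u| = (|x|^{\sigma}|u|)^a (|x|^{\beta}|u|)^{1-a}$ and Hölder with exponents $\frac{1}{r} = \frac{a}{r_1} + \frac{1-a}{q}$: this yields $\| |x|^{\gamma} u\|_{L^r} \le \| |x|^{\sigma} u\|_{L^{r_1}}^a \| |x|^{\beta} u\|_{L^q}^{1-a}$, and inserting the endpoint inequality into the first factor gives the claim, the full dimensional balance emerging automatically from combining the Sobolev balance with the Hölder exponent relation.

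The main obstacle is the regime $a < 1$ with $\alpha - \sigma > 1$, which the theorem permits (the bound $\alpha - \sigma \le 1$ being required only in the borderline case) precisely because the lower-order factor $\| |x|^{\beta} u\|_{L^q}$ supplies the missing control; here the reduction to the pure-gradient endpoint is unavailable, since that endpoint inequality fails for $\alpha - \sigma > 1$. For this case I would decompose $\mathbb{R}^N \setminus \{0\}$ into dyadic annuli $A_j = \{ 2^j \le |x| < 2^{j+1}\}$, on each of which the three weights are comparable to fixed powers of $2^j$, apply a single unweighted Gagliardo--Nirenberg inequality on a normalized annulus rescaled to $A_j$, and reassemble. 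The delicate point is the summation in $j$: the dimensional balance makes the per-annulus estimate scale invariant, so each annulus contributes comparably and a crude $\ell^1$ sum diverges; instead one must sum in $\ell^r$ via discrete Hölder together with the embeddings relating $r$ to $p$ and $q$, where the strict positivity conditions $\frac{1}{p}+\frac{\alpha}{N}, \frac{1}{q}+\frac{\beta}{N}, \frac{1}{r}+\frac{\gamma}{N} > 0$ and the restriction $0 \le a \le 1$ are exactly what guarantee convergence. Executing this discrete interpolation cleanly, and reconciling the overlapping enlarged annuli needed for the local Gagliardo--Nirenberg step, is where the bulk of the technical work lies.
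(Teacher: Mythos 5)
First, a point of reference: the paper gives no proof of this lemma at all --- it is quoted as a known theorem from Caffarelli--Kohn--Nirenberg \cite{CaKoNi84} --- so your attempt must be judged as a standalone proof of the full classical result, and as such it has genuine gaps. The most serious one is the case decomposition in your sufficiency argument. Your reduction to the pure-gradient endpoint requires the endpoint exponent $r_1$, defined by $\frac{1}{r_1} = \frac{1}{p} + \frac{\alpha-1-\sigma}{N}$, to be positive, and the endpoint inequality itself is an admissible instance of the lemma only when $\frac{1}{p} + \frac{\alpha-1}{N} > 0$. Neither follows from the hypotheses: take the unweighted case $\alpha = \beta = \sigma = 0$ with $p > N$ and $a$ small (classical Gagliardo--Nirenberg above the Morrey threshold). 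All hypotheses hold and $\alpha - \sigma = 0 \le 1$, yet $\frac{1}{r_1} = \frac{1}{p} - \frac{1}{N} < 0$: there is no Lebesgue-scale endpoint to interpolate against (the gradient norm alone controls a H\"older seminorm, not an $L^{r_1}$ norm), while your fallback argument is reserved for $\alpha - \sigma > 1$. So the Morrey regime $\frac{1}{p} + \frac{\alpha-1}{N} \le 0$ is simply not covered by either branch. A second hole sits inside the endpoint itself: the divergence-identity/absorption trick closes only in the Hardy case $\alpha - \sigma = 1$, where the balance forces $r_1 = p$ and the H\"older-conjugate factor reproduces $\| |x|^{\sigma} u \|_{L^{r_1}}^{r_1-1}$; for $0 \le \alpha - \sigma < 1$ the conjugate factor is a \emph{different} weighted norm, and the ``further interpolation'' you invoke to close it is itself a CKN-type inequality --- circular as written. (A non-circular route is to prove the $p=1$ endpoint directly and then substitute powers $|u|^{s}$.)

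The dyadic-annuli branch also cannot be summed the way you propose. Discrete H\"older in the form $\sum_j G_j^{ar} B_j^{(1-a)r} \le \bigl(\sum_j G_j^{p}\bigr)^{ar/p} \bigl(\sum_j B_j^{q}\bigr)^{(1-a)r/q}$ requires $\frac{ar}{p} + \frac{(1-a)r}{q} \ge 1$, whereas the dimensional balance gives the identity
\begin{equation*}
	\frac{1}{r} - \frac{a}{p} - \frac{1-a}{q} = \frac{a(\alpha - \sigma - 1)}{N},
\end{equation*}
which is strictly positive in your regime $\alpha - \sigma > 1$, so that $\frac{ar}{p} + \frac{(1-a)r}{q} < 1$ exactly where you need the summation; as an abstract sequence inequality the bound then fails (test flat sequences over $M$ indices: the left side grows like $M$, the right side like $M^{\frac{ar}{p}+\frac{(1-a)r}{q}}$). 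What actually rescues the dyadic argument in the non-borderline case is not the positivity conditions you cite but the mismatch of the per-annulus scaling exponents of the two right-hand factors: given the balance and $a<1$, non-borderlineness is equivalent to $\frac{1}{p}+\frac{\alpha-1}{N} \ne \frac{1}{q}+\frac{\beta}{N}$, so $G_j$ and $B_j$ cannot both be flat across many annuli for a single function, and this forced geometric decay is what makes the sum converge --- indeed, in the borderline case the flat multi-bump construction is precisely the counterexample showing why $\alpha - \sigma \le 1$ must then be imposed. Your necessity sketch (scaling for the balance, translated/dilated test profiles for the sign conditions) is fine in outline, but the sufficiency proof rests on a wrong case split and an unproved key summation, so the proposal does not constitute a proof.
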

In particular, the case
$\alpha = \beta = \gamma = 0$
is called the Gagliardo--Nirenberg inequality.

\subsection{Proof of Theorem \ref{thm:1}}
We first prove that the semilinear terms in Lemma \ref{lem:en} is bounded by $\| u \|_{X(T)}$.
\begin{lemma}\label{lem:nl1}
Let $u$ be a mild solution to \eqref{eq:ndw} on $[0,T]$.
Then, we have
\[
	\int_{\mathbb{R}^N} |u(t,x)|^{p+1} \Psi(t,x) \,dx
	+ \int_0^t \int_{\mathbb{R}^N} |u(s,x)|^{p+1} |\Psi_t(s,x)| \,dxds
	\le C \| u \|_{X(T)}^{p+1},
\]
with some constant $C>0$, provided that the right-hand side is finite.
\end{lemma}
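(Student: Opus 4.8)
The plan is to reduce both terms in the statement to the single pointwise-in-time estimate
\[
	\int_{\mathbb{R}^N} |u(s,x)|^{p+1} \Psi(s,x)\,dx \lesssim (1+s)^{-\kappa}\, \|u\|_{X(T)}^{p+1},
	\qquad \kappa := \frac{N(p-1)-2}{2},
\]
noting that $\kappa>0$ is exactly the subcriticality $p>p_F(N)$. Granting this, the first term follows by taking $s=t$ and using $(1+t)^{-\kappa}\le 1$. For the second term, the formula for $\Psi_t$ in Lemma~\ref{lem:we} together with $\frac{|x|^2}{1+t}\le\psi$ gives $|\Psi_t|\lesssim (1+t)^{-1}\Psi$, whence
\[
	\int_0^t\!\!\int_{\mathbb{R}^N} |u|^{p+1} |\Psi_t|\,dx\,ds
	\lesssim \int_0^t \frac{1}{1+s}\Big(\int_{\mathbb{R}^N}|u|^{p+1}\Psi\,dx\Big)ds
	\lesssim \|u\|_{X(T)}^{p+1}\int_0^\infty (1+s)^{-1-\kappa}\,ds,
\]
which is finite since $\kappa>0$. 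So everything reduces to the displayed pointwise estimate.

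To prove it I split the weight using $\psi^{\lambda}\le C\big(A^{\lambda}+(1+t)^{-\lambda}|x|^{2\lambda}\big)$, valid for $\lambda\ge 1$. The first part gives $A^\lambda\int|u|^{p+1}\,dx\lesssim\|u\|_{L^{p+1}}^{p+1}$, which I handle by the Gagliardo--Nirenberg inequality (the case $\alpha=\beta=\gamma=0$ of Lemma~\ref{lem:CaKoNi}): $\|u\|_{L^{p+1}}\lesssim\|\nabla_x u\|_{L^2}^{\theta}\|u\|_{L^2}^{1-\theta}$ with $\theta=\frac N2\cdot\frac{p-1}{p+1}$, which lies in $(0,1]$ precisely because $p\le\frac{N+2}{N-2}$. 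Inserting the rates $\|\nabla_x u\|_{L^2}\lesssim(1+s)^{-N/4-1/2}\|u\|_{X(T)}$ and $\|u\|_{L^2}\lesssim(1+s)^{-N/4}\|u\|_{X(T)}$ from Definition~\ref{def:X} produces $(1+s)^{-Np/2}\|u\|_{X(T)}^{p+1}$, which decays faster than $(1+s)^{-\kappa}$.

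The second part, $(1+t)^{-\lambda}\int|u|^{p+1}|x|^{2\lambda}\,dx=(1+t)^{-\lambda}\big\||x|^{2\lambda/(p+1)}u\big\|_{L^{p+1}}^{p+1}$, is the heart of the matter. From $\psi^\lambda\ge(1+t)^{-\lambda}|x|^{2\lambda}$ and $E_\Psi\ge\|\Psi^{1/2}\nabla_x u\|_{L^2}^2$ I first read off $\||x|^\lambda\nabla_x u\|_{L^2}\lesssim(1+t)^{\lambda/2}\|u\|_{X(T)}$; interpolating this in the weight against $\|\nabla_x u\|_{L^2}\lesssim(1+t)^{-N/4-1/2}\|u\|_{X(T)}$ yields $\||x|^s\nabla_x u\|_{L^2}$, with an explicit power of $(1+t)$, for every $s\in[0,\lambda]$. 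I then invoke the Caffarelli--Kohn--Nirenberg inequality of Lemma~\ref{lem:CaKoNi} to bound $\||x|^{2\lambda/(p+1)}u\|_{L^{p+1}}$ by $\||x|^s\nabla_x u\|_{L^2}^{a}\|u\|_{L^2}^{1-a}$, i.e.\ with no weight on the $u$-factor --- a restriction that is forced, since $\|u\|_{X(T)}$ provides no weighted $L^2$-control of $u$ itself. The dimensional balance determines $a$ as a function of $s$, and a direct computation shows that choosing $s$ at the admissible endpoint where $a=\frac{N(p-1)}{2(p+1)}$ (legitimate for large $\lambda$ when $p\ge 1+4/N$; the range $p_F(N)<p<1+4/N$ is handled similarly by taking $s=\lambda$) causes every $\lambda$-dependent term to cancel, leaving exactly the exponent $-\kappa$.

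The step I expect to be the main obstacle is verifying that the admissibility hypotheses of Lemma~\ref{lem:CaKoNi} --- $0\le a\le 1$, $0\le\alpha-\sigma$ (and, in the borderline case, $\alpha-\sigma\le 1$), together with $s\le\lambda$ so that the interpolation is legitimate --- can be satisfied simultaneously. These reduce to $p\le\frac{N+2}{N-2}$, which holds in our range, together with the demand that $\lambda$ be sufficiently large: the largeness of $\lambda$ opens a nonempty admissible window for $s$ and guarantees that the finite-$\lambda$ corrections to the decay exponent do not spoil its strict negativity. This is the origin of the explicit lower bounds on $\lambda$ recorded after Theorem~\ref{thm:1}, whereas the condition $p>p_F(N)$ enters only through the sign of the limiting exponent $-\kappa$.
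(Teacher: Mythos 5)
Your proposal is correct, and the bookkeeping checks out: with $a=\frac{N(p-1)}{2(p+1)}$ the dimensional balance forces $s=\frac{4\lambda}{N(p-1)}$, the $\lambda$-dependent terms cancel, and the resulting exponent is exactly $-\kappa=-\frac{N(p-1)-2}{2}$; the unweighted piece indeed decays like $(1+s)^{-Np/2}$, and $|\Psi_t|\lesssim (1+t)^{-1}\Psi$ reduces the spacetime term to an integrable time integral. The skeleton coincides with the paper's proof --- split $\Psi\lesssim A^{\lambda}+(1+t)^{-\lambda}|x|^{2\lambda}$, Gagliardo--Nirenberg for the unweighted piece, Caffarelli--Kohn--Nirenberg for the weighted piece --- but your organization of the CKN step is genuinely different. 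The paper keeps the full weight $|x|^{\lambda}$ on the gradient factor and compensates with an \emph{unweighted} $L^q$ factor, $q=\max\{2,\frac{N}{2}(p-1)\}$, chosen precisely so that the admissibility condition $0\le\alpha-\sigma$ (equivalent to $p\le 1+\frac{2q}{N}$) and the embedding $H^1\subset L^q$ hold; it then applies Gagliardo--Nirenberg to $\|u\|_{L^q}$ and verifies the exponent $\lambda\left(\frac{p+1}{2}\Theta-1\right)-\left(\frac{N}{4}+\frac{\mu}{2}\right)(1-\Theta)(p+1)<0$ under the explicit threshold $\lambda>\frac{q-1}{q}\frac{N+2-(N-2)p}{2(p-p_F(N))}$ recorded in Remark 1, uniformly in $p$. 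You instead interpolate the weight on the gradient (H\"older between $s=0$ and $s=\lambda$, which is legitimate) and pair directly with $\|u\|_{L^2}$, tuning $s$; the admissibility condition becomes $a\ge\frac{N(p-1)}{2(p+1)}$, and the constraint $s\le\lambda$ produces your case split at $p=1+\frac{4}{N}$. Both routes end on the same three controlled quantities $\||x|^{\lambda}\nabla_x u\|_{L^2}$, $\|\nabla_x u\|_{L^2}$, $\|u\|_{L^2}$, so they are two parametrizations of the same interpolation family. One minor imprecision: your opening display with the uniform exponent $-\kappa$ holds verbatim only in the branch $p\ge 1+\frac{4}{N}$; in the branch $p_F(N)<p<1+\frac{4}{N}$, taking $s=\lambda$ gives $a=\frac{4\lambda-N(p-1)}{2(p+1)(\lambda-1)}\to\frac{2}{p+1}$ and exponent $-\kappa+O(1/\lambda)$, which is all the argument needs (strict negativity for large $\lambda$ suffices for both terms), and your closing paragraph effectively acknowledges this. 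What each approach buys: yours makes the limiting exponent $-\kappa$ and the precise role of $p>p_F(N)$ transparent and avoids the auxiliary $L^q$ norm; the paper's choice of $q$ handles the whole range of $p$ in one stroke and yields the concrete lower bound on $\lambda$ stated after Theorem \ref{thm:1}.
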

\begin{proof}
We consider the first term of the left-hand side.
Noting
\[
	\int_{\mathbb{R}^N} |u|^{p+1} \Psi \,dx
	\lesssim \| u \|_{L^{p+1}}^{p+1} + (1+t)^{-\lambda} \| |x|^{\frac{2\lambda}{p+1}} u \|_{L^{p+1}}^{p+1}
\]
and applying Lemma \ref{lem:CaKoNi}, we have
\begin{align*}
	\| u \|_{L^{p+1}}
	&\lesssim \| \nabla_x u \|_{L^2}^{\theta} \| u \|_{L^2}^{1-\theta},
	\quad
	\theta = \frac{N(p-1)}{2(p+1)},\\
	\| |x|^{\frac{2\lambda}{p+1}} u \|_{L^{p+1}}
	&\lesssim \| |x|^{\lambda} \nabla_x u \|_{L^2}^{\Theta} \| u \|_{L^q}^{1-\Theta},
	\quad
	\Theta = \frac{\frac{1}{p+1}+\frac{2\lambda}{N(p+1)}-\frac{1}{q}}{\frac{1}{2}-\frac{1}{q}+\frac{\lambda-1}{N}},
\end{align*}
where 
$q = \max\{ 2, \frac{N}{2}(p-1) \}$.
Here, we remark that the condition $0\le \alpha - \sigma$ if $a > 0$ in Lemma \ref{lem:CaKoNi} requires
$p \le 1+\frac{2q}{N}$.
Moreover, $H^1(\mathbb{R}^N) \subset L^q(\mathbb{R}^N)$ holds, since
$p \le 1+\frac{4}{[N-2]_+}$.
They are the reasons why we take $q = \max\{ 2, \frac{N}{2}(p-1) \}$.
The Gagliardo--Nirenberg inequality further implies
\[
	\| u \|_{L^q} \lesssim \| \nabla_x u \|_{L^2}^{\mu} \| u \|_{L^2}^{1-\mu},
	\quad
	\mu = N \left( \frac{1}{2} - \frac{1}{q} \right).
\]
Therefore, we conclude
\begin{align*}
	\int_{\mathbb{R}^N} |u|^{p+1} \Psi \,dx
	&\lesssim
	\left( \| \nabla_x u \|_{L^2}^{\theta} \| u \|_{L^2}^{1-\theta} \right)^{p+1} \\
	&\quad
	+ (1+t)^{\lambda\left( \frac{p+1}{2}\Theta -1 \right)}
	\left( \left\| \frac{|x|^{\lambda}}{(1+t)^{\lambda/2}} \nabla_x u \right\|_{L^2}^{\Theta}
	(\| \nabla_x u \|_{L^2}^{\mu} \| u \|_{L^2}^{1-\mu} )^{1-\Theta} \right)^{p+1} \\
	&\lesssim
	\left[
	(1+t)^{-\left(\frac{N}{4}+\frac{\theta}{2}\right)(p+1)}
	+ (1+t)^{\lambda\left( \frac{p+1}{2}\Theta -1 \right) - \left( \frac{N}{4}+\frac{\mu}{2}\right) (1-\Theta)(p+1)}
	\right]
	\| u \|_{X(T)}^{p+1} \\
	&\lesssim
	\| u \|_{X(T)}^{p+1},
\end{align*}
since a straightforward computation shows
$\lambda\left( \frac{p+1}{2}\Theta -1 \right) - \left( \frac{N}{4}+\frac{\mu}{2}\right) (1-\Theta)(p+1) < 0$
if
$\lambda > \dfrac{q-1}{q} \dfrac{N+2-(N-2)p}{2(p-p_F(N)}$.
Moreover, noticing
$|\Psi_t| \lesssim (1+t)^{-1} \Psi$,
we can estimate the second term of the left-hand side of Lemma \ref{lem:nl1} in the same way. 
This completes the proof.
\end{proof}

Next, we estimate the term $(1+t)^{N/4} \| u \|_{L^2}$ in the norm $\| u \|_{X(T)}$. 
\begin{lemma}\label{lem:L2}
Let $u$ be a mild solution to \eqref{eq:ndw} on $[0,T]$.
Then, we have
\[
	(1+t)^{\frac{N}{4}} \left\| \int_0^t \mathcal{D}(t-s) |u(s)|^p \,ds \right\|_{L^2}
	\lesssim \| u \|_{X(T)}^{p}.
\]
\end{lemma}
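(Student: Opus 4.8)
The plan is to estimate the Duhamel term $\int_0^t \mathcal{D}(t-s)|u(s)|^p\,ds$ in $L^2$ by applying the Matsumura estimate (Lemma~\ref{lem:Ma}) with $k=|\alpha|=0$ and $q=1$, which gives
\[
	\left\| \int_0^t \mathcal{D}(t-s)|u(s)|^p \,ds \right\|_{L^2}
	\lesssim \int_0^t (1+t-s)^{-\frac{N}{4}} \left( \| |u(s)|^p \|_{L^1} + e^{-\frac{t-s}{4}} \| |u(s)|^p \|_{L^{-1}\text{-term}} \right) ds.
\]
The point is that $\| |u(s)|^p \|_{L^1} = \| u(s) \|_{L^p}^p$ and, in the regime where $|\alpha|+k-1 = -1$, the high-frequency correction term involves only an $L^{p}$-type quantity rather than a genuine Sobolev norm; I would handle it separately using the exponential decay $e^{-(t-s)/4}$, so the main contribution comes from the $L^1 \to L^2$ low-frequency part. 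Thus the first concrete step is to reduce everything to controlling $\| u(s) \|_{L^p}^p$ and a comparable lower-order term by the norm $\| u \|_{X(T)}$.

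Next I would bound $\| u(s) \|_{L^p}^p$ in terms of $\| u \|_{X(T)}$. Since $1 < p$ lies in the admissible range, I interpolate $L^p$ between $L^2$ and the Sobolev embedding for $\nabla_x u$: by the Gagliardo--Nirenberg inequality (the case $\alpha=\beta=\gamma=0$ of Lemma~\ref{lem:CaKoNi}),
\[
	\| u(s) \|_{L^p} \lesssim \| \nabla_x u(s) \|_{L^2}^{\nu} \| u(s) \|_{L^2}^{1-\nu},
	\quad \nu = N\left( \frac{1}{2} - \frac{1}{p} \right),
\]
valid precisely because $p \le \frac{N}{[N-2]_+}$ guarantees $0 \le \nu \le 1$. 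Inserting the decay rates built into $\| u \|_{X(T)}$, namely $\| \nabla_x u(s) \|_{L^2} \lesssim (1+s)^{-N/4-1/2} \| u \|_{X(T)}$ and $\| u(s) \|_{L^2} \lesssim (1+s)^{-N/4} \| u \|_{X(T)}$, yields a bound of the form $\| u(s) \|_{L^p}^p \lesssim (1+s)^{-\kappa} \| u \|_{X(T)}^p$ for an explicit exponent $\kappa = p(\frac{N}{4} + \frac{\nu}{2})$.

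The final step is the time-integral estimate. I substitute the pointwise bound into the Duhamel integral to obtain
\[
	(1+t)^{\frac{N}{4}} \int_0^t (1+t-s)^{-\frac{N}{4}} (1+s)^{-\kappa} \,ds \cdot \| u \|_{X(T)}^p,
\]
and the task is to show this time factor is $O(1)$ uniformly in $t$. I would split the integral at $s = t/2$: on $[0,t/2]$ the factor $(1+t-s)^{-N/4}$ is comparable to $(1+t)^{-N/4}$ and cancels the prefactor, leaving $\int_0^{t/2}(1+s)^{-\kappa}ds$, which converges provided $\kappa > 1$; on $[t/2,t]$ the factor $(1+s)^{-\kappa}$ is comparable to $(1+t)^{-\kappa}$, leaving $(1+t)^{N/4-\kappa}\int_{t/2}^t (1+t-s)^{-N/4}ds$, which is controlled when $N/4 < 1$ or, more generally, after checking the resulting exponent stays nonpositive. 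The main obstacle is precisely verifying that the supercritical condition $p > p_F(N) = 1 + 2/N$ translates into $\kappa > 1$ and into the balance needed for the $[t/2,t]$ piece; a direct computation shows $\kappa = \frac{N}{4}(p-1) + \frac{N}{2}\cdot\frac{p-1}{p}\cdot\frac{p}{2}$ simplifies so that $\kappa > 1$ is equivalent to $p > p_F(N)$, which is exactly where the Fujita exponent enters. The high-frequency remainder term with the $e^{-(t-s)/4}$ factor integrates to a harmless contribution by the same splitting, since the exponential suppresses any polynomial growth.
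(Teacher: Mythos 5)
Your proposal has a decisive gap at the interpolation step. The unweighted Gagliardo--Nirenberg inequality
$\| u \|_{L^p} \lesssim \| \nabla_x u \|_{L^2}^{\nu} \| u \|_{L^2}^{1-\nu}$ with $\nu = N\left(\frac12 - \frac1p\right)$
requires $0 \le \nu \le 1$, and $\nu \ge 0$ forces $p \ge 2$; your claim that $p \le \frac{N}{[N-2]_+}$ guarantees $0 \le \nu \le 1$ is false (it only gives $\nu \le 1$). For $1 < p < 2$ no inequality of this form can hold, since $H^1(\mathbb{R}^N) \not\subset L^p(\mathbb{R}^N)$ for $p < 2$: take $u(x) = (1+|x|^2)^{-\beta/2}$ with $\frac{N}{2} < \beta \le \frac{N}{p}$, which lies in $H^1$ but not in $L^p$. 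The admissible range $p_F(N) < p \le \frac{N}{[N-2]_+}$ contains exponents $p < 2$ for every $N \ge 3$ and consists \emph{entirely} of such exponents for $N \ge 5$, so your argument fails precisely in the main cases. This is the raison d'\^etre of the paper's polynomial weight: for $p < 2$ the paper invokes the weighted Caffarelli--Kohn--Nirenberg inequality (Lemma~\ref{lem:CaKoNi}),
$\| u \|_{L^p} \lesssim \| |x|^{\lambda} \nabla_x u \|_{L^2}^{\theta} \| u \|_{L^2}^{1-\theta}$ with $\theta = \frac{N(2-p)}{2p(\lambda-1)}$,
and the weighted factor is paid for by the term $\left\| \left(\frac{|x|}{\sqrt{1+t}}\right)^{\lambda} \nabla_x u \right\|_{L^2} \lesssim E_{\Psi}(t)^{1/2}$ built into $\| u \|_{X(T)}$, at the cost of a factor $(1+t)^{\lambda\theta p/2}$ absorbed by the condition $\lambda > \frac{2N-8/N-(N-2)p}{4(p-p_F(N))}$. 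Your proposal never touches $E_{\Psi}$ at all --- a warning sign, since if plain Gagliardo--Nirenberg sufficed here, the weight machinery of the paper would be superfluous for this lemma.

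There is a second, independent defect: you apply the $q=1$ Matsumura bound on all of $[0,t]$, splitting only the time integral. On $[t/2,t]$ this produces, for $N > 4$, a factor $(1+t)^{\frac{N}{4}-\kappa}$ with $\kappa = \frac{N}{2}(p-1)$, and boundedness needs $\kappa \ge \frac{N}{4}$, i.e. $p \ge \frac32$, which fails for $N \ge 5$ with $p$ close to $p_F(N) = 1 + \frac{2}{N}$; the exponent you promise to ``check'' comes out positive there. The paper instead uses the $q=2$ case of Lemma~\ref{lem:Ma} on $[t/2,t]$ (no decay factor needed) combined with $\| |u(s)|^p \|_{L^2} = \| u(s) \|_{L^{2p}}^p$, where Gagliardo--Nirenberg \emph{is} legitimate since $2p > 2$, with exponent $\theta = \frac{N(p-1)}{2p} \le 1$ exactly because $p \le \frac{N}{[N-2]_+}$. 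Your computation on $[0,t/2]$, including the identity $\kappa = \frac{N(p-1)}{2}$ and the equivalence $\kappa > 1 \Leftrightarrow p > p_F(N)$, is correct and coincides with the paper's $p \ge 2$ case, but without the weighted substitute for $p < 2$ and the $q=2$ treatment of the tail, the lemma is not proved along your route.
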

\begin{proof}
By Lemma \ref{lem:Ma}, we estimate
\begin{align*}
	&(1+t)^{\frac{N}{4}} \left\| \int_0^t \mathcal{D}(t-s) |u(s)|^p \,ds \right\|_{L^2} \\
	&\lesssim
	(1+t)^{\frac{N}{4}} \int_0^{\frac{t}{2}} (1+t-s)^{-\frac{N}{4}}
			\left( \| |u(s)|^p \|_{L^1} + e^{-\frac{t-s}{4}} \| |u(s)|^p \|_{L^2} \right) \,ds \\
	&\quad
	+ (1+t)^{\frac{N}{4}} \int_{\frac{t}{2}}^t \| |u(s)|^p \|_{L^2} \,ds.
\end{align*}
In order to estimate the term $\| |u(s)|^p \|_{L^1} = \| u(s) \|_{L^p}^p$,
we divide the case into $p < 2$ and $p \ge 2$.
When $p < 2$, applying Lemma \ref{lem:CaKoNi}, we have
\[
	\| u \|_{L^p} \lesssim \| |x|^{\lambda} \nabla_x u \|_{L^2}^{\theta} \| u \|_{L^2}^{1-\theta},
	\quad
	\theta = \frac{N(2-p)}{2p(\lambda -1)}.
\]
We remark that $\theta <1$ holds, since $\lambda > \dfrac{2N-(N-2)p}{2p}$.
Thus, we have
\begin{align*}
	\| u \|_{L^p}^p
	&\lesssim
	(1+t)^{\frac{\lambda p}{2} \theta - \frac{N}{4}p(1-\theta)}
	\left( \left\| \left( \frac{|x|}{\sqrt{1+t}} \right)^{\lambda} \nabla_x u \right\|_{L^2}^{\theta}
		\left( (1+t)^{\frac{N}{4}} \| u \|_{L^2} \right)^{1-\theta} \right)^p \\
	&\lesssim 
	(1+t)^{\frac{\lambda p}{2} \theta - \frac{N}{4}p(1-\theta)} \| u \|_{X(T)}^p
\end{align*}
A direct computation shows that
$\frac{\lambda p}{2} \theta - \frac{N}{4}p(1-\theta) < -1$,
since $\lambda > \frac{2N-8/N-(N-2)p}{4(p-p_F(N))}$.
When $p \ge 2$, the Gagliardo--Nirenberg inequality implies
\[
	\| u \|_{L^p} \lesssim \| \nabla_x u \|_{L^2}^{\theta} \| u \|_{L^2}^{1-\theta},\quad
	\theta = \frac{N(p-2)}{2p}.
\]
Then, we easily obtain
\[
	\| u \|_{L^p}^p \lesssim
	(1+t)^{-\left(\frac{N}{4}+\frac{1}{2}\right)p \theta - \frac{N}{4}p(1-\theta)} \| u \|_{X(T)}^p
\]
with $-\left(\frac{N}{4}+\frac{1}{2}\right)p \theta - \frac{N}{4}p(1-\theta) < -1$,
since $p > p_F(N)$.
Therefore, in both cases, we conclude
\[
	\int_0^{\frac{t}{2}} \| |u(s)|^p \|_{L^1} \,ds \lesssim \| u \|_{X(T)}^p.
\]
Next, we estimate the term $\| |u(s)|^p \|_{L^2} = \| u(s) \|_{L^{2p}}^p$.
Applying the Gagliardo--Nirenberg inequality, we have
\[
	\| u \|_{L^{2p}} \lesssim \| \nabla_x u \|_{L^2}^{\theta} \| u \|_{L^2}^{1-\theta},
	\quad
	\theta = \frac{N(p-1)}{2p}.
\]
We remark that $\theta \le 1$ holds, since $p \le \frac{N}{[N-2]_+}$.
Using the above estimate and repeat a similar computation as above, we have
\[
	(1+t)^{\frac{N}{4}} \int_0^{\frac{t}{2}}  e^{-\frac{t-s}{4}} \| |u(s)|^p \|_{L^2}  \,ds
	+ (1+t)^{\frac{N}{4}} \int_{\frac{t}{2}}^t \| |u(s)|^p \|_{L^2} \,ds
	\lesssim \| u \|_{X(T)}^p,
\]
which completes the proof.
\end{proof}

In the same way, we can estimate the other terms in the norm $\| u \|_{X(T)}$.

\begin{lemma}\label{lem:dL2}
Let $u$ be a mild solution to \eqref{eq:ndw} on $[0,T]$.
For nonnegative integers $k, j$ satisfying $k+j \le 1$,
we have
\[
	(1+t)^{\frac{N}{4}+k+\frac{j}{2}} \left\| \int_0^t \partial_t^k \nabla_x^j \mathcal{D}(t-s) |u(s)|^p \,ds \right\|_{L^2}
	\lesssim \| u \|_{X(T)}^{p}.
\]
\end{lemma}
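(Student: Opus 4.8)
The plan is to mimic the proof of Lemma \ref{lem:L2} almost verbatim, the key observation being that the differentiated kernel $\partial_t^k \nabla_x^j \mathcal{D}(t-s)$ produces, via the Matsumura estimate (Lemma \ref{lem:Ma}), an extra decay factor $(1+t-s)^{-k-j/2}$, which is precisely compensated by the additional weight $(1+t)^{k+j/2}$ appearing on the left-hand side. Concretely, I would split the time integral as $\int_0^t = \int_0^{t/2} + \int_{t/2}^t$ and apply Lemma \ref{lem:Ma} with $q=1$ on the first interval and $q=2$ on the second. Since we only treat $k+j \le 1$, the Sobolev index obeys $k+j-1 \le 0$, so $\||u(s)|^p\|_{H^{k+j-1}} \lesssim \||u(s)|^p\|_{L^2}$, and the high-frequency contributions carrying $e^{-(t-s)/4}$ are harmless in both cases exactly as in Lemma \ref{lem:L2}.

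On $[0,t/2]$ one has $1+t-s \sim 1+t$, so the full prefactor satisfies $(1+t)^{N/4+k+j/2}(1+t-s)^{-N/4-k-j/2} \sim 1$, and the estimate reduces literally to the bound $\int_0^{t/2} \||u(s)|^p\|_{L^1}\,ds \lesssim \|u\|_{X(T)}^p$ already established in Lemma \ref{lem:L2} (through Lemma \ref{lem:CaKoNi} when $p<2$ and the Gagliardo--Nirenberg inequality when $p \ge 2$). On $[t/2,t]$ I would reuse the Gagliardo--Nirenberg estimate $\||u(s)|^p\|_{L^2} = \|u(s)\|_{L^{2p}}^p \lesssim (1+s)^{-N(2p-1)/4}\|u\|_{X(T)}^p$ from the proof of Lemma \ref{lem:L2}; since $s \sim t$ there, the only remaining task is to control
\[
	(1+t)^{\frac{N}{4}+k+\frac{j}{2}-\frac{N(2p-1)}{4}} \int_{t/2}^{t} (1+t-s)^{-k-\frac{j}{2}}\,ds .
\]
The integral equals $\int_0^{t/2}(1+\tau)^{-k-j/2}\,d\tau$, which grows at most like $(1+t)^{1-k-j/2}$ when $k+j/2<1$ and like $\log(1+t)$ when $k+j/2=1$. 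In either case the net exponent of $(1+t)$ collapses to $1+\tfrac{N}{2}(1-p)$, which is strictly negative precisely because $p>p_F(N)$, so the whole expression is bounded uniformly in $t$.

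The hard part, such as it is, lies only in the borderline integration on $[t/2,t]$ for $(k,j)=(1,0)$, where $\int_{t/2}^t (1+t-s)^{-1}\,ds$ produces a logarithm; this is absorbed by the \emph{strict} inequality $p>p_F(N)$, which leaves room to spare in the negative exponent $1+\tfrac{N}{2}(1-p)$. No genuinely new difficulty arises beyond those already resolved in Lemma \ref{lem:L2}, and the combination of the $[0,t/2]$ and $[t/2,t]$ bounds yields the stated estimate.
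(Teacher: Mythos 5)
Your proof is correct and takes essentially the same route as the paper: the paper likewise applies Lemma \ref{lem:Ma} after splitting the Duhamel integral at $t/2$ (with $q=1$ on $[0,t/2]$ and $q=2$ on $[t/2,t]$, the extra kernel decay $(1+t-s)^{-k-j/2}$ matching the extra weight $(1+t)^{k+j/2}$), and then simply states that the rest is ``completely the same as Lemma \ref{lem:L2}.'' Your explicit verification of the omitted details --- in particular the borderline logarithm from $\int_{t/2}^t(1+t-s)^{-1}\,ds$ in the case $(k,j)=(1,0)$, absorbed by the strictly negative net exponent $1+\tfrac{N}{2}(1-p)<0$ coming from $p>p_F(N)$ --- correctly fills in what the paper leaves implicit.
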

\begin{proof}
By Lemma \ref{lem:Ma}, we have
\begin{align*}
	&(1+t)^{\frac{N}{4}+k+\frac{j}{2}} \left\| \int_0^t \partial_t^k \nabla_x^j \mathcal{D}(t-s) |u(s)|^p \,ds \right\|_{L^2} \\
	&\lesssim
	(1+t)^{\frac{N}{4}+k+\frac{j}{2}}
	\int_0^{\frac{t}{2}} (1+t-s)^{-\frac{N}{4}-k-\frac{j}{2}}
		\left( \| |u(s)|^p \|_{L^1} + e^{-\frac{t-s}{4}} \| |u(x)|^p \|_{L^2} \right)\,ds \\
	&\quad
	+ (1+t)^{\frac{N}{4}+k+\frac{j}{2}} \int_{\frac{t}{2}}^t (1+t-s)^{-k-\frac{j}{2}} \| |u(s)|^p \|_{L^2} \,ds.
\end{align*}
The rest part is completely the same as Lemma \ref{lem:L2} and we omit the detail.
\end{proof}

Now let us prove Theorem \ref{thm:1}.
By Lemmas \ref{lem:en} and \ref{lem:nl1}, we have
\begin{align*}
	E_{\Psi}(t)^{1/2} \lesssim E_{\Psi}(0)^{1/2} + \| (1+|x|)^{\frac{\lambda}{p+1}} u_0 \|_{L^{p+1}}^{\frac{p+1}{2}}
						+ \| u \|_{X(T)}^{\frac{p+1}{2}}.
\end{align*}
Applying Lemma \ref{lem:CaKoNi} to the second term, the right-hand side can be further bounded by
$C (\varepsilon + \varepsilon^{\frac{p+1}{2}}) + C \| u \|_{X(T)}^{\frac{p+1}{2}}$.
Next, by Lemmas \ref{lem:Ma}, we can easily prove that
\begin{align*}
	&(1+t)^{\frac{N}{4}+1} \| u^L_t (t) \|_{L^2}
	+ (1+t)^{\frac{N}{4}+\frac{1}{2}} \| \nabla_x u^L(t) \|_{L^2} 
	+ (1+t)^{\frac{N}{4}} \| u^L(t) \|_{L^2} \\
	&\lesssim
	\| u_0 \|_{L^1} + \| u_0 \|_{H^1} + \| u_1 \|_{L^1} + \| u_1 \|_{L^2}
	\lesssim \varepsilon
\end{align*}
for the linear part of the solution
$u^L := \mathcal{D}(t) (u_0+u_1) + \frac{\partial}{\partial t} \mathcal{D}(t) u_0$.
Moreover, by Lemmas \ref{lem:L2} and \ref{lem:dL2}, we have
\begin{align*}
	&(1+t)^{\frac{N}{4}+1} \left\| \int_0^t \partial_t \mathcal{D}(t-s) |u(s)|^p \,ds \right\|_{L^2}
	+(1+t)^{\frac{N}{4}+\frac{1}{2}} \left\| \int_0^t \nabla_x \mathcal{D}(t-s) |u(s)|^p \,ds \right\|_{L^2} \\
	&+ (1+t)^{\frac{N}{4}} \left\| \int_0^t \mathcal{D}(t-s) |u(s)|^p \,ds \right\|_{L^2} \\
	&\lesssim 
	\| u \|_{X(T)}^{p}.
\end{align*}
Putting all the above estimates together, we conclude
\[
	\| u \|_{X(T)} \lesssim \varepsilon + \varepsilon^{\frac{p+1}{2}} + \| u \|_{X(T)}^{\frac{p+1}{2}} + \| u \|_{X(T)}^{p},
\]
where the implicit constant is independent of $T$.
From this and a standard argument give the a priori estimate
$\| u\|_{X(T)} \le C$ with some constant $C>0$ independent of $T$,
provided that $\varepsilon$ is sufficiently small,
and the proof is complete.

\section*{Acknowledgements}
This work was supported by JSPS KAKENHI Grant Number JP20K14346.
The author is deeply grateful to Professors Masahiro Ikeda, Koichi Taniguchi and Motohiro Sobajima
for the fruitful discussion.


\begin{thebibliography}{9}
\bibitem{CaKoNi84}\textsc{L. Caffarelli, R. Kohn, L. Nirenberg},
\textit{First order interpolation inequalities with weights},
Compositio Math. \textbf{53} (1984), 259--275. 

\bibitem{Da15}\textsc{M. D'Abbicco},
\textit{The threshold of effective damping for semilinear wave equations},
Math. Methods Appl. Sci. \textbf{38} (2015), 1032--1045.

\bibitem{HaKaNa04DIE}\textsc{N. Hayashi, E. I. Kaikina, P. I. Naumkin},
\textit{Damped wave equation with super critical nonlinearities},
Differential Integral Equations \textbf{17} (2004), 637--652.

\bibitem{IkTaWa}\textsc{M. Ikeda, K. Taniguchi, Y. Wakasugi},
\textit{Global existence and asymptotic behavior for semilinear damped wave equations on measure spaces},
arXiv:2106.10322v3.

\bibitem{IkTa05}\textsc{R. Ikehata, K. Tanizawa},
\textit{Global existence of solutions for semilinear damped wave equations in $\mathbf{R}^N$
with noncompactly supported initial data},
Nonlinear Anal. \textbf{61} (2005), 1189--1208.

\bibitem{IkNiZh06}\textsc{R. Ikehata, K. Nishihara, H. Zhao},
\textit{Global asymptotics of solutions to
the Cauchy problem for the damped wave equation with absorption},
J. Differential Equations \textbf{226} (2006), 1--29.

\bibitem{Ma76}\textsc{A. Matsumura},
\textit{On the asymptotic behavior of solutions of semi-linear wave equations},
Publ. RIMS, Kyoto University \textbf{12} (1976) 169--189.

\bibitem{So19DIE}\textsc{M. Sobajima},
\textit{Global existence of solutions to semilinear damped wave equation with slowly decaying
initial data in exterior domain},
Differential Integral Equations \textbf{32} (2019), 615--638.

\bibitem{ToYo01}\textsc{G. Todorova, B. Yordanov},
\textit{Critical exponent for a nonlinear wave equation with damping},
J. Differential Equations \textbf{174} (2001), 464--489.

\bibitem{Wa23}\textsc{Y. Wakasugi},
\textit{Decay property of solutions to the wave equation with space-dependent damping, absorbing nonlinearity, and polynomially decaying data},
Math. Methods Appl. Sci. \textbf{46} (2023), 7067--7107.


\end{thebibliography}
\end{document}